\theoremstyle{plain}
\newtheorem{theorem}{Theorem}[section]            %{commend}{in print}[numbered within]       
\newtheorem{proposition}[theorem]{Proposition}  %{commend}[numbered with]{in print as}
\newtheorem{corollary}[theorem]{Corollary}	      %	      
\theoremstyle{definition}
\newtheorem{definition}[theorem]{Definition}
\numberwithin{theorem}{section}
\numberwithin{equation}{section}
\numberwithin{figure}{section}
\newcommand{\gaction}[2]{\genfrac{}{}{0.5pt}{}{#1}{#2}% 
                        \!\lower2pt\hbox{\rotatebox[origin=c]{-90}{{$\looparrowright$}}}}
\newcommand{\dotfraction}[2]{\genfrac{}{}{0.5pt}{}{#1}{#2}% 
                        \!\lower.5pt\hbox{{$\circ$}}}
\def\ontop{\accentset}
\begin{document}

\title{\bf Lucas, Fibonacci, and Chebyshev polynomials from matrices}%[Dedekind tessellation]

\author{Jerzy Kocik
\\ \small Department of Mathematics, Southern Illinois University, Carbondale, IL62901
\\\small  jkocik{@}siu.edu  
%\footnote{Department of Mathematics, Southern Illinois University, Carbondale, IL62901, jkocik{@}siu.edu}  
% \ and Robert Owczarek%              %\thanks{support}
%\footnote{your affiliation}  
%\\ \small Department of Mathematics
%\\ \small Southern Illinois University, Carbondale, IL62901
%\\ \small jkocik{@}siu.edu  
}

%\date{\small (Sep 2011)}
\date{}

\maketitle

\begin{abstract}
\noindent
A simple matrix formulation of the Fibonacci, Lucas, Chebyshev, and Dixon polynomials polynomials is presented.
It utilizes the powers and the symmetric tensor powers of a certain matrix.
\\
\\
%\smalskip\noindent
{\bf Keywords:} Matrices, Chebyshev polynomials, Fibonacci polynomials, Dixon polynomials, symmetric tensor powers.
\\
\\
{\bf MSC:}  11B39 %  	Fibonacci and Lucas numbers 
\end{abstract}

\def\Tr{{\rm Tr}\,} 
\def\too{\longrightarrow} 
%---------------------------------------------------------------------------------------------------------- 1 
\section{Introduction} 

Chebyshev polynomials play an important role in presentation of many basic mathematical concepts.
They have a number of definitions.  
Here we present a novel definitions of the Chebyshev 
polynomials $T_n(x)$ and $U(x)$ 
in terms of the traces of certain powers of matrices:
$$
T_n(x) = {\small \frac{1}{2}}\;  \Tr \begin{bmatrix}  2x&-1\\ 1& 0\end{bmatrix}^n \,,
\qquad
U_n(x) = {\small \frac{1}{2}}\; \Tr \begin{bmatrix}  2x&-1\\ 1& 0\end{bmatrix}^{\odot n} \,,
$$
where ``$\odot n$'' denotes the $n$-th symmetric tensor power.
Similarly, Lucas and Fibonacci polynomials  $L_n(x)$ and $F_n(x)$ may be interpreted as follows:
$$
L_n(x) = \Tr \begin{bmatrix}  x&1\\ 1& 0\end{bmatrix}^n \,,
\qquad
F_n(x) = \Tr \begin{bmatrix}  x&1\\ 1& 0\end{bmatrix}^{\odot n} \,.
$$
All these polynomials are special cases of the regular Dixon polynomials,
which may also be formulated in terms of matrices:
$$
\ontop 2 D_n(x,y) = \Tr \begin{bmatrix}  x&y\\ 1& 0\end{bmatrix}^n
\qquad
\ontop 1 D_n(x,y) = \Tr \begin{bmatrix}  x&y\\ 1& 0\end{bmatrix}^{\odot n}
$$
We review basic definitions in the next section, and then derive the above formulas.

\newpage

%--------------------------------------------------------
\section{Polynomials: Lucas, Fibonacci, Chebyshev, Dixon}

Chebyshev polynomials emerge from the trigonometric identities, like
%.  $\cos(n\varphi) = {\rm poly} (\cos(\varphi))$
$$
\cos(2\varphi) = 2\cos^2(\varphi) - 1 ,
\qquad
\cos(3\varphi) = 4\cos^3(\varphi) - 3\cos^2\varphi
$$
The cosine of a multiple angle is expressible in terms o polynomials in $cos(\varphi)$
due to the recurrence property:
$$
\begin{array}{rl}
\cos((n+1)\varphi)       &  = \cos(n\varphi)\cos\varphi -\sin(n\varphi)\sin\varphi  \\
                                  &  = 2\cos(n\varphi)\cos\varphi - \cos((n-1)\varphi 
\end{array}
$$
Following this induction rule, and a similar one for the function of sine, one defines 
the two types of Chebyshev polynomials presented in Table \ref{table}.

\begin{definition}
Chebyshev polynomials of the first and the second kind, $T(x)$ and $U(x)$, respectively, 
are polynomials in variable $x$ defined in a recursive way as follows:
\begin{equation}
\begin{array}{ll}
T_0 = 1  \,,  \qquad  T_1 = x \,,\qquad \quad T_{n+1} =2x\, T_n - \, T_{n-1}\\
U_0 = 1  \,,  \qquad  U_1 = 2x \,,\qquad U_{n+1} =2x\, U_n - \, U_{n-1}
\end{array}
\end{equation}
\end{definition}

%------------------------------------------
\begin{table}
\hrule

~

$$
\begin{array}{l}
\hbox{Chebyshev first kind}\\[12pt]
\begin{array}{cl}
T_0 &= \ 1  \\
T_1 &= \ x  \\
T_2 &= \ 2x^2 - 1 \\ 
T_3 &= \ 4x^3  -3x  \\
T_4 &= \ 8x^4 -8x^2+1 \\ 
T_5 &= \ 16x^5 -20x^3  + 5x\\
\end{array}\\[41pt]
\quad \boxed{ \ T_{n+1}  = 2x\,T_n - T_{n-1}\phantom{\Big|}\ }\\
\\
\quad \ \cos(n\varphi) = T_n(\cos\varphi) \\
~
\end{array}
\qquad\qquad%%%%%%%%
\begin{array}{l}
\hbox{Chebyshev second kind}\\[12pt]
\begin{array}{cl}
U_0 &= \ 1  \\
U_1 &= \ 2x  \\
U_2 &= \ 4x^2 - 1 \\ 
U_3 &= \ 8x^3  -4x  \\
U_4 &= \ 16x^4 -12x^2+1 \\ 
U_5 &= \ 32x^5 -32x^3  + 6x\\
\end{array}\\[41pt]
\quad \boxed{ \ U_{n+1}  = 2x\,U_n -  U_{n-1} \phantom{\Big|}\ }\\
\\
\quad\sin(n\varphi) = \dfrac{U_{n-1}(\cos\varphi)}{\sin(\varphi)}\\
\end{array}
$$

$$
\begin{array}{l}
\hbox{Reduced Chebyshev first kind}\\[12pt] 
\begin{array}{cl}
\dot  T_0 &= \ 2  \\
\dot T_1 &= \ x  \\
\dot T_2 &= \ x^2 - 2 \\ 
\dot T_3 &= \ x^3  -3x  \\
\dot T_4 &= \ x^4 -4x^2 +2 \\ 
\dot T_5 &= \ x^5 -5x^3  + 5x\\
\end{array}\\[41pt]
 \quad \boxed{ \ \dot T_{n+1}  = x\,\dot T_n -  \dot T_{n-1}\phantom{\Big|}\ }\\
\\
\quad \ T_n(x) = \frac{1}{2}\dot T(2x)\\
%&\cos(n\varphi) = \frac{1}{2}\dot T_n(2\cos\varphi)
\end{array}
\qquad\quad%%%%%%%%%%%%
\begin{array}{l}
\hbox{Reduced Chebyshev second kind}\\[12pt]
\begin{array}{cl}
\dot  U_0 &= \ 1  \\
\dot  U_1 &= \ x  \\
\dot  U_2 &= \ x^2 - 1 \\ 
\dot  U_3 &= \ x^3  -2x  \\
\dot U_4 &= \ x^4 -3x^2 +1 \\ 
\dot U_5 &= \ x^5 -4x^3  + 3x\\
\end{array}\\[41pt]
\quad \boxed{\  \dot U_{n+1}  = x\,\dot U_n -  \dot U_{n-1}\phantom{\Big|}\ }\\
\\
\quad \ U_n(x) = \dot U(2x)\\
%&\sin(n\varphi) = \dfrac{\dot U_{n-1}(2\cos\varphi)}{\sin(\varphi)}\\
\end{array}
$$

$$
\begin{array}{l}
\hbox{Lucas}\\[12pt]
\begin{array}{cl}
 L_0 &= \ 2  \\
 L_1 &= \ x  \\
 L_2 &= \ x^2 + 2 \\ 
 L_3 &= \ x^3  + 3x  \\
 L_4 &= \ x^4 + 4x^2 +2 \\ 
 L_5 &= \ x^5 +5x^3  + 5x\\
\end{array}\\[41pt]
 \quad \boxed{\  L_{n+1}  = x\, L_n +   L_{n-1}\phantom{\Big|} \ } \\
\end{array}
%%%%%%%%%%%
%
\qquad\qquad
\begin{array}{l} 
\hbox{Fibonacci}\\[12pt]
\begin{array}{cl}
  F_0 &= \ 1  \\
  F_1 &= \ x  \\
  F_2 &= \ x^2 + 1 \\ 
  F_3 &= \ x^3 + 2x  \\
  F_4 &= \ x^4 +3x^2 +1 \\ 
  F_5 &= \ x^5 +4x^3  + 3x\\
\end{array}\\[41pt]
\quad \boxed{\   F_{n+1}  = x\, F_n +   F_{n-1} \phantom{\Big|} \ }\\
\end{array}
$$

~

\hrule
\caption{basic polynomials and their examples}
\label{table}
\end{table}

To eliminate the powers of 2, we may adjust the definitions of the standard Chebyshev polynomials 
to their ``{\bf reduced versions}'', defined and exemplified in the middle part of Table \ref{table}.
It is a small price to pay, as the references to trigonometric identities are still valid:
$$
\cos(n\varphi) = \frac{1}{2}\dot T_n(2\cos\varphi) 
\qquad
\sin(n\varphi) = \dfrac{\dot U_{n-1}(2\cos\varphi)}{\sin(\varphi)}
$$
Another adjustment concerns the sign in the recursive rules, from minus to plus.  
This leads to polynomials that are known as Lucas and Fibonacci, presented in Table \ref{table}.  
Their evaluations at $x=1$ produce the Lucas and Fibonacci sequences, respectively
\cite{Lucas,HL}.
%, hence the name.

These four types of polynomials have an obvious generalization introduced in 1897 by Leonard Dickson \cite{Dickson}.
We shall distinguish between the standard and the regular versions, the first being the original Dixon's version. 
The differ by the sign convention.

\begin{definition}
\label{def:standardDixon}
The {\bf standard Dixon polynomials}  $P(x,y)$ are  polynomials in two variables $x$ and $y$, defined in a recursive way as follows:
\begin{equation}
\label{eq:dixon}
P_0 = c  \,,  \quad  P_1 = x \,,\qquad \boxed{ \ P_{n+1} =x\, P_n - y \, P_{n-1}\phantom{\Big|}\ }
\end{equation}
where $c$ is a constant.  If $c=2$ or $c=1$, the polynomials will be called of the first kind and the second kind, respectively.
We shall use notation $\ontop c P$ to indicate the initial constant value $c$ in \eqref{eq:dixon}.
(The minus sign is for historical consistency.)
\end{definition}
Here are the first few standard Dixon polynomials given explicitly:
$$
%\begin{array}{rl}
% \ &\hbox{general}\\
%P_0\!\!\! &= \ c  \\
%P_1\!\!\! &= \ x  \\
%P_2\!\!\! &= \ x^2 + cy \\ 
%P_3 \!\!\!&= \ x^3 + (c+1)xy \\
%P_4 \!\!\!&= \ x^4 + (c+2)x^2y +cy^2 \\ 
%P_5\!\!\! &= \ x^5 +(c+3)x^3y  + (c+1)xy^2
%\end{array}
%\quad
%
\begin{array}{rl}
 \ &\hbox{first kind} \   (c=2)\\
\ontop 2 P_0 \!\!\!&= \ 2 \\
\ontop 2 P_1\!\!\! &= \ x \\
\ontop 2 P_2\!\!\! &= \ x^2 - 2y \\ 
\ontop 2 P_3 \!\!\!&= \ x^3 - 3xy \\
\ontop 2 P_4 \!\!\!&= \ x^4 - 4x^2y +2y^2 \\ 
\ontop 2 P_5 \!\!\!&= \ x^5 - 5x^3y  + 3xy^2
\end{array}
\quad\qquad
\begin{array}{rl}
\ &\hbox{second kind} \ (c=1)\\
\ontop 1 P_0 \!\!\!&= \ 1  \\
\ontop 1 P_1 \!\!\!&= \ x \\
\ontop 1 P_2\!\!\! &= \ x^2 - y \\ 
\ontop 1 P_3 \!\!\!&= \ x^3 - 2xy \\
\ontop 1 P_4 \!\!\!&= \ x^4 - 3x^2y +y^2 \\ 
\ontop 1 P_5 \!\!\!&= \ x^5 - 4x^3y  +2xy^2
\end{array}
$$

This is a generalization of the known polynomials and sequences:
\\

\begin{tabular}{r|cc}
$\ontop c P(x,y)$   &$c=2$&$c=1$\\
\hline
\\[-10pt]
\  &  (reduced) &(reduced)  \\
$y=1 \ \too$ & Chebyshev polynomials & Chebyshev polynomials \\
 &  \ of the first second kind& \ of the second kind \\[3pt]
$y=-1 \ \too$ &  Lucas polynomial   &  Fibonacci polynomials \\[3pt]
$y=-1, x=1 \ \too$ &  Lucas numbers  &  Fibonacci numbers  \\
\end{tabular}

~\\
\\
One may find more appealing the altered definition of Dixon polynomials, 
namely with difference in \eqref{eq:dixon} replaced by the sum:

\begin{definition}
The {\bf regular Dixon polynomials} are defined by the following recurrence 
\begin{equation}
\label{eq:dixon+}
D_0 = c  \,,  \quad  D_1 = x \,,\qquad \boxed{ \ D_{n+1} =x\, D_n + y \, D_{n-1}\phantom{\Big|}\ }
\end{equation}
where $c$ is a constant (cf. Definition \ref{def:standardDixon}.
\end{definition}

This is of course equivalent definition, the difference between the regular and standard Dixon polynomials 
is due to replacement $y\to (-y)$, that is,  $P(x,y) = D(x,-y)$.  
The regular Dixon polynomials have all coefficient signs positive:
$$
\begin{array}{rl}
 \ &\hbox{first kind} \   (c=2)\\
\ontop 2 D_0 \!\!\!&= \ 2 \\
\ontop 2 D_1\!\!\! &= \ x \\
\ontop 2 D_2\!\!\! &= \ x^2 + 2y \\ 
\ontop 2 D_3 \!\!\!&= \ x^3 + 3xy \\
\ontop 2 D_4 \!\!\!&= \ x^4 + 4x^2y +2y^2 \\ 
\ontop 2 D_5 \!\!\!&= \ x^5 + 5x^3y  + 3xy^2
\end{array}
\quad\qquad
\begin{array}{rl}
\ &\hbox{second kind} \  (c=1)\\
\ontop 1 D_0 \!\!\!&= \ 1  \\
\ontop 1 D_1 \!\!\!&= \ x \\
\ontop 1 D_2\!\!\! &= \ x^2 + y \\ 
\ontop 1 D_3 \!\!\!&= \ x^3 + 2xy \\
\ontop 1 D_4 \!\!\!&= \ x^4 + 3x^2y +y^2 \\ 
\ontop 1 D_5 \!\!\!&= \ x^5 + 4x^3y  +2xy^2
\end{array}
$$

\newpage

%------------------------------------------------------------
\section{Polynomials via matrices}

It is well-known that $2\times 2$ determinant-1 matrices of the  special linear group  ${\rm SL}(2)$
relate to Chebyshev polynomials, namely 
$$
\Tr M^n = T_n(\Tr M)
$$ 
(Chebyshev polynomial  of the first kind evaluated for $M$ with $\det M=1$). 
A generalization of the above to matrices of arbitrary determinant was offered in \cite{Owczarek}.
We restate it below, using the language of symmetric powers and with adjusted terminology.
(For details on the symmetric tensor powers see \cite{jk-fk}, Sec 7.2., and the Appendix below.)

\begin{proposition}
\label{thm:owczarek}
\label{thm:1}
Let $M\in {\rm Mat(2,\mathbb F)}$ be any $2\times 2$ matrix over some field $\mathbb F$.  
The trace of the $n$-th power and of the symmetric $n$-th power of $M$ 
coincides with the Dixon polynomial of the second and first kind, corresoindingly,
with the variables equal to $x=\Tr M$ and $y=\det M$, i.e., 
%$$
\begin{equation}
\label{eq:owczarek}
\begin{array}{lll}
(a) & \Tr ( M^n)   &  = \  \ontop 2 P_n(\Tr M,\; \det M) \\
(b) & \Tr ( M^{\odot n}) & = \  \ontop 1 P_n(\Tr M,\; \det M) 
\end{array}
\end{equation}
\end{proposition}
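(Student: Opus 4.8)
The plan is to show that, for each fixed $n$, the two trace sequences obey precisely the recurrence \eqref{eq:dixon} defining the standard Dixon polynomials, with matching initial data, and then to conclude by the uniqueness of solutions of a linear second-order recurrence. Throughout write $t=\Tr M$ and $d=\det M$, the quantities to be substituted for $x$ and $y$.

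Part (a) is the elementary one, driven by Cayley--Hamilton. The characteristic polynomial of the $2\times2$ matrix $M$ is $\lambda^{2}-t\lambda+d$, so $M^{2}=tM-dI$; multiplying through by $M^{n-1}$ and taking traces gives
\[
\Tr M^{n+1}=t\,\Tr M^{n}-d\,\Tr M^{n-1},
\]
which is exactly \eqref{eq:dixon} with $x=t$, $y=d$. Since $\Tr M^{0}=\Tr I=2=\ontop 2 P_{0}$ and $\Tr M^{1}=t=\ontop 2 P_{1}(t,d)$, induction on $n$ gives (a). (One could alternatively derive (a) from the generating function of the power sums $\alpha^{n}+\beta^{n}$, but Cayley--Hamilton is cleaner.)

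For part (b) the natural route is through eigenvalues. Passing to the algebraic closure $\overline{\mathbb F}$, let $\alpha,\beta$ be the eigenvalues of $M$, so $t=\alpha+\beta$ and $d=\alpha\beta$. If $\alpha\neq\beta$ then $M$ is diagonalizable with eigenbasis $e_{1},e_{2}$, and the monomial basis $\{\,e_{1}^{\,k}e_{2}^{\,n-k}:0\le k\le n\,\}$ of $\mathrm{Sym}^{n}(\mathbb F^{2})$ diagonalizes $M^{\odot n}$ with eigenvalues $\alpha^{k}\beta^{\,n-k}$ --- the defining eigenvalue property of the symmetric tensor power (see the Appendix). Hence $\Tr M^{\odot n}=\sum_{k=0}^{n}\alpha^{k}\beta^{\,n-k}=h_{n}(\alpha,\beta)$, the complete homogeneous symmetric polynomial of degree $n$. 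The generating function $\sum_{n\ge0}h_{n}z^{n}=\bigl((1-\alpha z)(1-\beta z)\bigr)^{-1}=(1-tz+dz^{2})^{-1}$ yields $h_{n+1}=t\,h_{n}-d\,h_{n-1}$ with $h_{0}=1$, $h_{1}=t$; comparing with \eqref{eq:dixon} this is $\ontop 1 P_{n}(t,d)$. To drop the hypothesis $\alpha\neq\beta$ and to handle an arbitrary field, observe that for fixed $n$ both sides of (b) are polynomials with integer coefficients in the four entries of $M$ (the entries of $M^{\odot n}$ in the monomial basis are such polynomials), so it suffices to establish the identity for the generic matrix over $\mathbb Z[m_{11},m_{12},m_{21},m_{22}]$, whose eigenvalues are distinct in a suitable extension; the computation above then applies, and specialization gives (b) over every $\mathbb F$.

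The only nonroutine point is this last reduction in (b): it must be phrased as a polynomial identity for the generic matrix rather than as pointwise Zariski density (which would be inadequate over finite fields), and this forces $M^{\odot n}$ to be taken as the genuine induced operator on the quotient $\mathrm{Sym}^{n}$, so that its matrix entries are integral polynomials in those of $M$. A purely module-theoretic alternative for the recurrence in (b) is the Clebsch--Gordan-type splitting $\mathbb F^{2}\otimes\mathrm{Sym}^{n}\mathbb F^{2}\cong\mathrm{Sym}^{n+1}\mathbb F^{2}\oplus(\Lambda^{2}\mathbb F^{2}\otimes\mathrm{Sym}^{n-1}\mathbb F^{2})$ of $\mathrm{GL}(2)$-modules, which upon applying $M$ and taking traces gives $t\,\Tr M^{\odot n}=\Tr M^{\odot(n+1)}+d\,\Tr M^{\odot(n-1)}$ at once; since that splitting can fail in characteristic $p\le n$, I would still fall back on the generic-matrix argument for a proof valid over any field.
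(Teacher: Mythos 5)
Your proposal is correct, and for part (a) it is essentially the paper's argument: the paper's opening identity $\Tr(A^2B)=\Tr A\cdot\Tr(AB)-\det A\cdot\Tr B$ is exactly Cayley--Hamilton, $A^2=(\Tr A)A-(\det A)I$, multiplied by $B$ and traced, so your derivation of the three-term recurrence for $\Tr M^n$ coincides with the paper's. For part (b) the two proofs share the same core fact --- that $\Tr M^{\odot n}$ equals the complete homogeneous symmetric polynomial $h_n(\alpha,\beta)=\sum_{k}\alpha^{k}\beta^{n-k}$ of the eigenvalues, which satisfies $h_{n+1}=(\alpha+\beta)h_n-\alpha\beta\,h_{n-1}$ --- but reach it differently. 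The paper conjugates $M$ to lower-triangular form and reads the diagonal of $M^{\odot n}$ off the induced action on the monomial basis, which handles repeated eigenvalues with no extra work (though it silently passes to a splitting field and never remarks on it); you instead diagonalize in the generic case, use the generating function $(1-tz+dz^2)^{-1}$ for the recurrence, and then close the gap for repeated eigenvalues and arbitrary (including finite) fields by treating the identity as a polynomial identity for the generic matrix over $\mathbb Z[m_{11},\dots,m_{22}]$ and specializing. Your route is more scrupulous about the field-theoretic issues the paper glosses over, at the cost of the extra specialization step that the paper's triangularization renders unnecessary; both are valid, and your Clebsch--Gordan remark, with its caveat about characteristic $p\le n$, is a sensible aside rather than a gap.
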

%$$

\noindent
{\bf Proof of the first statement}:
Start with an observation that for any pair of $2\times 2$ matrices 
%\begin{lemma}
$A,B\in {\rm Mat}(2, \mathbb F)$ this simple equation holds:
%$$
\begin{equation}
\Tr ( A^2B) \  = \ {\Tr}\, A\cdot \Tr  AB   -  {\det}\, A \cdot \Tr B 
\end{equation}
%$$
(show by direct inspection).
Substitute $A=M$ and $B=M^n$.
We get
\begin{equation}
\Tr ( M^{n+2}) \  = \ \Tr M\cdot \Tr  M^{n+1}   -  {\det}\, A \cdot \Tr M^n 
\end{equation}
Denote $\Tr M = x$ and $\det M = y$.
Then
$$
\Tr M^{n+2} = x \Tr M^{n+1} - y \Tr M^n
$$
Since the traces satisfy the same rule as the polynomials and start with the  same initial values:
$M^0 ={\rm Id}$ (identity matrix) and $\Tr {\rm Id}=2$, 
as well as $M^1 = M$, thus denoting $x=\Tr M$ we get the result.
\\
\\
{\bf Proof of the second statement:}
Note that the trace is invariant under similarity transformation of matrices, $M\to TMT^{-1}$
and every matrix may be brought to the lower-triangular form, we focus on matrix 
$$
M \ = \ \begin{bmatrix} a&0\\ c&b\end{bmatrix}
$$
We shall show that 
%$$
\begin{equation}
\label{eq:aaa}
\Tr M^{\odot n} \ = \ \sum_{i=0}^n  a^{n-i}b^i\ = \  a^n + a^{n-1}b + a^{n-2}b^2 +\ldots + b^n  
\end{equation}
%$$
Indeed:
$$ \begin{bmatrix} a&0\\ c&b\end{bmatrix}^T
\begin{bmatrix} x\\ y\end{bmatrix}
=
 \begin{bmatrix} ax\\ cx+by\end{bmatrix}
$$
The k-th basis vector $\mathbf e_k \equiv x^{n-k}y^k$ is transformed to 
$$
\begin{aligned}
(ax)^{n-k}(cx+by)^k & \ = \ 
(ax)^{n-k} \; \sum_i \;{k \choose i} \,c^ix^i\; b^{k-i}y^{k-i}
%= \sum_i {k \choose i} a^{n-k}b^{k-i} c^i \, x^{n-k+i}y^{k-i}
\\
     & \ = \ \sum_i {k \choose i} \, a^{n-k}b^{k-i} c^i \, {\mathbf e}_{k+i}
\end{aligned}
$$
Thus the coefficient at $\mathbf e_k$ corresponds to $i=0$
and therefore the $k$-th entry on the diagonal of $M^{\odot n}$ is $a^nb^{n-k}$.
The trace is therefore as in \eqref{eq:aaa}.
Now, it is a simple algebraic manipulation to see that:
$$
\begin{aligned}
 a^n + a^{n-1}b + a^{n-2}b^2 +\ldots + b^n  & \ =  \ (a+b)\cdot ( a^{n-1} + a^{n-2}b + a^{n-3}b^2 +\ldots + b^{n-1}) \\
                            \                                          & \  \quad \quad - ab\cdot( a^{n-2} + a^{n-3}b + a^{n-4}b^2 +\ldots + b^{n-2} )
\end{aligned}
$$
This is equivalent to
$$
\Tr M^{\odot n} \ = \ \Tr M \cdot \Tr M^{\odot(n-1)}  - \det M \cdot \Tr M^{\odot(n-2)}
$$
The initial terms agree with \eqref{eq:dixon} for $c=1$; \  indeed, $\Tr M^{\odot 0} \equiv \Tr[1] = 1$
and $\Tr M^{\odot 1} \equiv \Tr M$.
\qed
\\

%\newpage
We are now  ready justify the matrix representations of the Dixon polynomials:

\begin{proposition}
The following may be viewed as the definition of the standard Dixon  polynomials
$$
\ontop 2 P_n(x,y) = \Tr \begin{bmatrix}  x&-y\\ 1& 0\end{bmatrix}^n
\qquad
\ontop 1 P_n(x,y) = \Tr \begin{bmatrix}  x&-y\\ 1& 0\end{bmatrix}^{\odot n}
$$
%and the regular Dixon polynomials
%$$
%\ontop 2 D_n(x,y) = \Tr \begin{bmatrix}  x&y\\ 1& 0\end{bmatrix}^n
%\qquad
%\ontop 1 D_n(x,y) = \Tr \begin{bmatrix}  x&y\\ 1& 0\end{bmatrix}^{\odot n}
%$$
\end{proposition}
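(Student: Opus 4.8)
The plan is to obtain this statement as an immediate specialization of Proposition~\ref{thm:owczarek}. First I would record the trace and determinant of the companion-type matrix appearing in the claim: writing
$$
M \ = \ \begin{bmatrix} x & -y \\ 1 & 0 \end{bmatrix},
$$
one has $\Tr M = x$ and $\det M = x\cdot 0 - (-y)\cdot 1 = y$. Then part~(a) of Proposition~\ref{thm:owczarek} reads $\Tr(M^n) = \ontop 2 P_n(\Tr M,\det M) = \ontop 2 P_n(x,y)$, and part~(b) reads $\Tr(M^{\odot n}) = \ontop 1 P_n(\Tr M,\det M) = \ontop 1 P_n(x,y)$. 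That is the entire proof; nothing further is needed beyond substituting $(\Tr M,\det M)=(x,y)$.

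If one prefers an argument that does not invoke Proposition~\ref{thm:owczarek}, I would instead proceed directly for the first identity: $M$ is the companion matrix of $t^2 - xt + y$, so Cayley--Hamilton gives $M^2 = xM - y\,{\rm Id}$, hence $M^{n+1} = xM^n - yM^{n-1}$ and therefore $\Tr M^{n+1} = x\,\Tr M^n - y\,\Tr M^{n-1}$; since $\Tr M^0 = \Tr {\rm Id} = 2$ and $\Tr M^1 = x$ coincide with the initial data $\ontop 2 P_0 = 2$, $\ontop 2 P_1 = x$ of recurrence~\eqref{eq:dixon}, induction closes that case. For the symmetric power $M^{\odot n}$ there is no shortcut comparable to Cayley--Hamilton, so I would simply reuse the computation in the proof of the second statement of Proposition~\ref{thm:owczarek} (pass to lower-triangular form, read off the diagonal of $M^{\odot n}$, and sum).

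I do not expect any real obstacle here: the proposition is a corollary of the one just proved. The only point requiring a moment's care is the sign bookkeeping --- verifying that $\det\begin{bmatrix} x & -y \\ 1 & 0\end{bmatrix}$ equals $+y$, which is precisely what produces the minus sign in the recurrence~\eqref{eq:dixon} for $\ontop c P$ --- together with the degenerate cases $n=0,1$, where $M^0 = {\rm Id}$ has trace $2$ (matching $\ontop 2 P_0$) while the empty symmetric power $M^{\odot 0}$ has trace $1$ (matching $\ontop 1 P_0$).
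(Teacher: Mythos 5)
Your first paragraph is exactly the paper's proof: the paper also treats this as an immediate corollary of Proposition~\ref{thm:owczarek}, obtained by checking that the companion-type matrix has trace $x$ and determinant $y$ and then substituting into \eqref{eq:owczarek}. The additional direct Cayley--Hamilton argument and the sign/initial-value checks are correct but not needed beyond what the specialization already gives.
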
 

\begin{proof}
This may be seen as a simple corollary to Proposition \ref{thm:owczarek}
by a smart choice of the matrix.
%Define matrix 
%$$
%M = \begin{bmatrix}  x&-y\\ 1& 0\end{bmatrix}
%$$
Namely, observe that the following matrix has the trace and determinant 
matching with the variables $x$  and $y$:
$$
\Tr \begin{bmatrix}  x&-y\\ 1& 0\end{bmatrix} = x \,,
\qquad
\det \begin{bmatrix}  x&\! -y\\ 1& 0\end{bmatrix} = y
$$
Substitute this matrix to \eqref{eq:owczarek}, and the result follows.
\end{proof}

\newpage

\begin{corollary}
In particular, the following are alternative definitions of  Lucas, Fibonacci  and Chebyshev, polynomials:
%$$
%S_n(x) = \Tr \begin{bmatrix}  x&-1\\ 1& 0\end{bmatrix}^n
%\qquad
%F_n(x) = \Tr \begin{bmatrix}  x&1\\ 1& 0\end{bmatrix}^n
%$$
%
$$
\begin{array}{lll}
\hbox{\rm Dixon regular}
\qquad &
\ontop 2 D_n(x) = \Tr \begin{bmatrix}  x&y\\ 1& 0\end{bmatrix}^n
\qquad &
\ontop 1 D_n(x) = \Tr \begin{bmatrix}  x&y\\ 1& 0\end{bmatrix}^{\odot n}
\\[14pt]
\hbox{\rm Lucas and Fibonacci}
\qquad &
L_n(x) = \Tr \begin{bmatrix}  x&1\\ 1& 0\end{bmatrix}^n
\qquad &
F_n(x) = \Tr \begin{bmatrix}  x&1\\ 1& 0\end{bmatrix}^{\odot n}
\\[14pt]
\hbox{\rm Chebyshev reduced}
\qquad &
\dot T_n(x) = \Tr \begin{bmatrix}  x&-1\\ 1& 0\end{bmatrix}^n
\qquad &
\dot U_n(x) = \Tr \begin{bmatrix}  x&-1\\ 1& 0\end{bmatrix}^{\odot n}
\\[14pt]
\hbox{\rm Chebyshev}
\qquad &
T_n(x) = \Tr \begin{bmatrix}  2x&-1\\ 1& 0\end{bmatrix}^n
\qquad  &
U_n(x) = \Tr \begin{bmatrix}  2x&-1\\ 1& 0\end{bmatrix}^{\odot n}
\end{array}
$$
\end{corollary}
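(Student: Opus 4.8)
The plan is to derive each row of the corollary from Proposition~\ref{thm:owczarek} (equivalently, from the matrix formula for the standard Dixon polynomials just proved) by specializing the two variables $x,y$ and invoking the elementary conversions recorded in Section~2: the sign substitution $\ontop 2 P_n(x,y)=\ontop 2 D_n(x,-y)$ (and likewise for $c=1$) relating standard and regular Dixon polynomials, the identifications $\dot T_n(x)=\ontop 2 P_n(x,1)$, $\dot U_n(x)=\ontop 1 P_n(x,1)$ of the reduced Chebyshev polynomials and $L_n(x)=\ontop 2 D_n(x,1)$, $F_n(x)=\ontop 1 D_n(x,1)$ of the Lucas and Fibonacci polynomials, and the rescalings $T_n(x)=\tfrac12\dot T_n(2x)$, $U_n(x)=\dot U_n(2x)$. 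No fresh matrix computation is required beyond what the previous two propositions already supply; the content is entirely bookkeeping of signs and rescalings.

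For the regular Dixon row I would apply Proposition~\ref{thm:owczarek} directly to $M=\begin{bmatrix} x & y\\ 1 & 0\end{bmatrix}$, for which $\Tr M=x$ and $\det M=-y$. Formula~\eqref{eq:owczarek} then gives $\Tr(M^n)=\ontop 2 P_n(x,-y)=\ontop 2 D_n(x,y)$ and $\Tr(M^{\odot n})=\ontop 1 P_n(x,-y)=\ontop 1 D_n(x,y)$, which is the first line. Setting $y=1$ in this line, and using that $L_n$ (resp. $F_n$) and $\ontop 2 D_n(x,1)$ (resp. $\ontop 1 D_n(x,1)$) satisfy the same recurrence $Q_{n+1}=xQ_n+Q_{n-1}$ with the same initial data $Q_0=2,\ Q_1=x$ (resp. $Q_0=1,\ Q_1=x$) as read off from Table~\ref{table}, yields the Lucas/Fibonacci row.

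For the reduced Chebyshev row I would instead specialize the \emph{standard} Dixon matrix $\begin{bmatrix} x & -y\\ 1 & 0\end{bmatrix}$ at $y=1$, obtaining $\begin{bmatrix} x & -1\\ 1 & 0\end{bmatrix}$; since $\dot T_n(x)=\ontop 2 P_n(x,1)$ and $\dot U_n(x)=\ontop 1 P_n(x,1)$ (check by matching recurrences and initial terms with Table~\ref{table}), the stated formulas follow. The last row then comes from the substitution $x\mapsto 2x$ in the reduced Chebyshev formulas together with $T_n(x)=\tfrac12\dot T_n(2x)$ and $U_n(x)=\dot U_n(2x)$; in particular the factor $\tfrac12$ belongs in the $T_n$ identity (matching the statement in the Introduction) but not in the $U_n$ one.

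The only step requiring genuine care — and the place I would expect an error to creep in — is keeping the two sign conventions apart: the standard Dixon matrix carries $-y$ in the upper-right corner while the regular one carries $+y$, so the reduced Chebyshev polynomials live at $y=1$ of the \emph{standard} convention, the Lucas and Fibonacci polynomials at $y=1$ of the \emph{regular} convention (equivalently $y=-1$ of the standard one), and Chebyshev of the first kind additionally carries the rescaling factor $\tfrac12$. The cleanest safeguard is to push every case through the single identities $\Tr M^{n}=\ontop 2 P_n(\Tr M,\det M)$ and $\Tr M^{\odot n}=\ontop 1 P_n(\Tr M,\det M)$, letting the chosen corner entry of the $2\times2$ matrix dictate the value of $\det M$, rather than composing the intermediate polynomial substitutions by hand.
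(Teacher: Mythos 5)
Your proof is correct and follows exactly the route the paper intends: the Corollary is given there without any written proof, being treated as an immediate specialization of the two preceding propositions, and your bookkeeping of the substitutions $y\mapsto\pm1$, $x\mapsto 2x$, the sign flip $P_n(x,y)=D_n(x,-y)$, and the matching of recurrences and initial values against Table~\ref{table} is precisely the omitted argument. You are also right to insist on the factor $\tfrac12$ in the $T_n$ line (as in the Introduction): since $\dot T_n(2x)=2T_n(x)$, the trace in the Corollary's last row equals $2T_n(x)$ as printed, so the statement there is missing the $\tfrac12$ while the $U_n$ identity needs no such factor.
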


%--------------------------------------------------------
\section*{Appendix}

%-----------------------------

{\large \bf A.1 Symmetric powers}
\\
\\
Here we present a quick way to calculate the symmetric tensor powers of a $2\times 2$ matrix.
For this purpose, we use $x$ and $y$ as dummy variables (not to be confused with $x$ and $y$ of the Dixon polynomials). 
Let us define the following map from 2- to $(n+1)$-dimensional standard vector space:
$$
\begin{bmatrix}  x\\ y\end{bmatrix} 
                          \quad \longrightarrow \quad 
\begin{bmatrix}  x\\ y\end{bmatrix}^{\odot n} \ = \  
                          [\,x^n,\; x^{n-1}y, \; x^{n-2}y^2,\; ... ,\; xy^{n-1}, \; y^n\,]^T
$$ 
Denote $\mathbf v = [ x, y]^T$.
The $n$-th symmetric power of a $2\times 2$ matrix 
$M$
%$$
%M \ = \ \begin{bmatrix} a 	& b   	 \\ 
%                                c 	& d         \end{bmatrix}
%$$
is an  $(n+1)\times(n+1)$ matrix $M^{\odot n}$ defined by
%$$
%   \varepsilon \left([x\, y]\,M\right) \ = \ \varepsilon([x, y]) \, M^{\oplus n})
%$$
%    
%$$
\begin{equation}
\label{eq:symmetric}
\boxed{\quad
 \left(\,M\,\mathbf v\,\right)^{\odot n} \ = \  M^{\odot n}\,\mathbf v^{\odot n} 
\phantom{\Big|^m}\ }
\end{equation}
%$$

~

\noindent
{\bf Example:}
Consider the matrix
$$
M = \begin{bmatrix}  a&b\\ 1& 0\end{bmatrix} \,.
$$
The calculations for the second symmetric power are:
$$
M\mathbf v \ =\  \begin{bmatrix}  a&b\\ 1& 0\end{bmatrix} 
                         \begin{bmatrix}  x\\ y\end{bmatrix} 
\ = \ 
\begin{bmatrix}  ax+by\\ x\end{bmatrix} 
\quad \longrightarrow \quad
\begin{bmatrix}  (ax+by)^2\\ (ax+by)x\\ x^2\end{bmatrix} 
\ = \ 
\begin{bmatrix}  a^2x^2+2abxy +b^2y^2 \\ ax^2+bxy\\ x^2\end{bmatrix} 
\ = \ 
\begin{bmatrix}  a^2&2ab&b^2\\
                         a&b& 0\\
                         1&0&0\end{bmatrix} 
\begin{bmatrix}  x^2\\
                         xy\\
                         y^2\end{bmatrix} 
$$   
Thus the second power $M^{\odot 2}$ is the last square matrix, above.
Other powers are calculated similarly from \eqref{eq:symmetric}.

~

\noindent
{\bf Remark:}
If the action of the matrices is defined on the dual space, i.e., the row vectors, the symmetric powers differ slightly,
but the diagonals of both versions coincide.  

~

For other applications of symmetric tensor powers, see \cite{F-K, jk-fk}.

\newpage

Below, we present the regular and the symmetric powers of 
matrix
$$
M = \begin{bmatrix}  x&y\\ 1& 0\end{bmatrix} \,.
$$
The regular powers are:
$$
\begin{aligned}
M^0 & \ = \  \begin{bmatrix}  1&0 \\ 0& 1\end{bmatrix} \\
M^1 & \ = \  \begin{bmatrix}  x&y \\ 1& 0\end{bmatrix} \\
M^2 & \ = \  \begin{bmatrix}  x^2+y&xy   \\ 
                                            x & y     \end{bmatrix}  \\
M^3 & \ = \  \begin{bmatrix}  x^3 +2xy 	& x^2y+y^2   	 \\ 
                                             x^2 +y 	& xy         \end{bmatrix}\\
M^4 & \ = \  \begin{bmatrix}  x^4 +3x^2y +y^2  	&x^3y +2xy^2 \\ 
                                            x^3 +2xy 	&x^2y  +y^2  	     \end{bmatrix}\\
M^5 & \ = \  \begin{bmatrix}  x^5 +3x^3y +3xy^2  	&x^4y +3x^2y^2 +y^3  \\ 
                                            x^4 +3x^2y +y^2 	   	&x^3y  +y^2  	     \end{bmatrix}
\end{aligned}
$$

\noindent
The symmetric tensor powers are:
$$
\begin{aligned}
M^{\odot 0} & \ = \  \begin{bmatrix}  0 \end{bmatrix} \\
M^{\odot 1} & \ = \  \begin{bmatrix}  x&y \\ 1& 0\end{bmatrix} \\
M^{\odot 2} & \ = \  \begin{bmatrix}  x^2&2xy  & y^2  \\ 
                                                      x      & y   & 0  \\
                                                      1   & 0  & 0         \end{bmatrix}  \\
M^{\odot 3} & \ = \  \begin{bmatrix}  x^3  	&3x^2y    	&3xy^2	&y^3   \\ 
                                                      x^2  	& 2xy   	&y^2 	&0   \\
                                                       x    	& y	  	&0		&0    \\
                                                      1   	&  0 		& 0    	&0   \end{bmatrix}\\
M^{\odot 4} & \ = \  \begin{bmatrix}  x^4  	&4x^3y    	&6x^2y^2	&4xy^3  	&y^4   \\ 
                                                      x^3  	&3x^2y    	&3xy^2	&y^3  	&0   \\
                                                    x^2    	&  2xy    	&y^2		&0		&0    \\
                                                      x   	&y		&0		& 0    	&0  \\
						        1   	& 0  		&0		& 0    	&0   \end{bmatrix}
\end{aligned}
$$
One may easily verify that the traces of the powers correspond to the regular Dixon polynomials of the second and first kind,
$\Tr M^n = \ontop 2 D_n$ and $\Tr M^{\odot n} = \ontop 1 D_n$ 
 (and to the standard version, under replacement $y\to -y$). 
Substituting $y=1, -1$ yields the further specifications to the other polynomials discussed: Chebyshev, Lucas and  Fibonacci.
Note that under substitution $x=y=1$,
the symmetric powers become the upper parts of the Pascal triangle in a matrix form. 
\\

\noindent
{\large \bf A.2  A handful of properties}
\\
\\
The matrix formulation of the polynomials 
leads to other interesting properties and symmetries. 
Here we mention just a few:

~

\noindent
(1)

\vspace{-.5in}

$$
M^{n+1} 
\ = \ \begin{bmatrix} \ontop 1D_{n+1} 	& y\, \ontop 1 D_n   	 \\ 
                                \ontop 1D_n 	& y\, \ontop 1 D_{n-1}         \end{bmatrix}\,.
$$
for instance:
$$
M^3 
 \ = \  \begin{bmatrix}  x^3 +2xy 	& x^2y+y^2   	 \\ 
                                             x^2 +y 	& xy         \end{bmatrix}
\ = \ \begin{bmatrix} \ontop 1D_3 	& y\, \ontop 1 D_2   	 \\ 
                                \ontop 1D_2 	& y\, \ontop 1 D_1         \end{bmatrix}\,.
$$

\noindent
(2)

\vspace{-.35in}

$$
\ontop 1 D_n + y\, \ontop 1 D_{n-2}  =  \ontop 2 D_n \,.
$$

\noindent
(3)

\vspace{-.35in}

$$
\det M^{\odot n} = \big(   \det M \big)^{   \frac{n(n+1)}{2}       }  \,.
$$
(4) \ 
Off-diagonal sums:
$$
M^{\odot n}_{1,1+k} + M^{\odot n}_{1,2+k} +...+ M^{\odot n}_{n-k,n} \ = \ y^k\,D_{n-k} \,.
$$
These and other properties will be discussed elsewhere.

%Bibliography-------------------------------------------------------------------------------

\end{document}